\newtheorem*{question}{Question}
\theoremstyle{plain}
\newtheorem{thm}{Theorem}[section]
\newtheorem{theorem}[thm]{Theorem}
\newtheorem{cor}[thm]{Corollary}
\newtheorem{corollary}[thm]{Corollary}
\newtheorem{lemma}[thm]{Lemma}
\newtheorem{proposition}[thm]{Proposition}
\theoremstyle{remark}
\theoremstyle{definition}
\def\al{{\alpha}}
\def\om{{\omega}}
\def\la{{\lambda}}
\let\La\Lambda
\def\si{{\sigma}}
\def\Si{{\Sigma}}
\def\ga{{\gamma}}
\def\epsilon{{\varepsilon}}
\def\ep{{\varepsilon}}
\def\phi{{\varphi}}
\DeclareMathAlphabet{\doba}{U}{msb}{m}{n}
\gdef\mH{\doba{H}}
\gdef\mR{\doba{R}}
\gdef\mS{\doba{S}}
\def\cE{\mathcal{E}}
\def\cF{\mathcal{F}}
\def\vol{{\mathop{\rm vol}}}
\def\Scal{{s}}
\let\scal\Scal
\let\pa\partial
\let\na\nabla
\let\ti\tilde
\let\witi\widetilde
\newcommand{\definedas}{\mathrel{\raise.095ex\hbox{\rm :}\mkern-5.2mu=}}
\begin{document}
%%%%%%%%%%%%%%%%%%%%%%%%%%%%%%%%%%%%%%%%%%%%%%%%%%%%%%%%%%%%%%%%%%%%%%%%%

%%%%%%%%%%%%%%%%%%%%%%%%%%%%%%%%%%%%%%%%%%%%%%%%%%%%%%%%%%%%%%%%%%%%%%%%%
%\begin{center}
%\framebox{\framebox{
%\vbox{This is project {\red \Project}\\
%Current version {\blue\Version}, from
%{\blue\Datum}, most recent changes by {\blue\Person}.}
%}}
%\end{center}
%%%%%%%%%%%%%%%%%%%%%%%%%%%%%%%%%%%%%%%%%%%%%%%%%%%%%%%%%%%%%%%%%%%%%%%%%

\title
[The conformal Yamabe constant \break of product manifolds]
{The conformal Yamabe constant \break of product manifolds}

\author{Bernd Ammann}
\address{Fakult\"at f\"ur  Mathematik \\
Universit\"at Regensburg \\
93040 Regensburg \\
Germany}
\email{bernd.ammann@mathematik.uni-regensburg.de}

\author{Mattias Dahl}
\address{Institutionen f\"or Matematik \\
Kungliga Tekniska H\"ogskolan \\
100 44 Stockholm \\
Sweden}
\email{dahl@math.kth.se}

\author{Emmanuel Humbert}
\address{Laboratoire de Math\'ematiques et Physique Th\'eorique \\
Universit\'e de Tours \\
Parc de Grandmont \\
37200 Tours - France \\}
\email{Emmanuel.Humbert@lmpt.univ-tours.fr}

%\author{Emmanuel Humbert}
%\address{Institut \'Elie Cartan, BP 239 \\
%Universit\'e de Nancy 1 \\
%54506 Vandoeuvre-l\`es-Nancy Cedex \\
%France}
%\email{humbert@iecn.u-nancy.fr}

\begin{abstract}
Let $(V,g)$ and $(W,h)$ be compact Riemannian manifolds of dimension
at least $3$. We derive a lower bound for the conformal Yamabe
constant of the product manifold $( V \times W, g+h)$ in terms of the
conformal Yamabe constants of $(V,g)$ and $(W,h)$.
\end{abstract}

\subjclass[2000]{35J60 (Primary), 35P30, 58J50, 58C40 (Secondary)}
%It is the 2010 classification, but 2010 is not recognized by the style
%
% 35J60   Nonlinear PDE of elliptic type
% 35P30   Nonlinear eigenvalue problems, nonlinear spectral theory for PDO
% 57R65   Surgery and handlebodies
% 58J50   Spectral problems; spectral geometry; scattering theory
% 58C40   Spectral theory; eigenvalue problems

\date{\today}

\keywords{Yamabe constant, Yamabe invariant, product manifolds}

\thanks{B. Ammann was partially supported by DFG Sachbeihilfe AM 144/2-1,}
\thanks{M. Dahl was partially supported by the Swedish Research Council,}
\thanks{E. Humbert was partially supported by ANR-10-BLAN 0105.}

\maketitle

\setcounter{tocdepth}{1}
%\tableofcontents

%%%%%%%%%%%%%%%%%%%%%%%%%%%%%%%%%%%%%%%%%%%%%%%%%%%%%%%%%%%%%%%%%%%%%%%%%
\section{Introduction}
%%%%%%%%%%%%%%%%%%%%%%%%%%%%%%%%%%%%%%%%%%%%%%%%%%%%%%%%%%%%%%%%%%%%%%%%%

%%%%%%%%%%%%%%%%%%%%%%%%%%%%%%%%%%%%%%%%%%%%%%%%%%%%%%%%%%%%%%%%%%%%%%%%%
\subsection{The Yamabe functional, constant scalar curvature metrics,
  and Yamabe metrics}
%%%%%%%%%%%%%%%%%%%%%%%%%%%%%%%%%%%%%%%%%%%%%%%%%%%%%%%%%%%%%%%%%%%%%%%%%

For a Riemannian manifold $(M,G)$ we denote the scalar curvature by
$\Scal^G$, Laplace operator by $\Delta^G$, and volume form by $dv^G$. In general
the dependence on the Riemannian metric is denoted by the metric as a
superscript.

For integers $m \geq 3$ we set $a_m \definedas \frac{4(m-1)}{m-2}$
and $p_m \definedas \frac{2m}{m-2}$. Let $C^\infty_c(M)$ denote the
space of compactly supported smooth functions on $M$. For a Riemannian
manifold $(M,G)$ of dimension $m \geq 3$ we define the Yamabe
functional by
\begin{equation*}
\cF^G (u)
\definedas
\frac{
\int_M \left( a_m |du|_G^2 + \Scal^G u^2 \right) \, dv^G}
{\left( \int_M |u|^{p_m} \, dv^G \right)^{\frac{2}{p_m}}} ,
\end{equation*}
where $u \in C^\infty_c(M)$ does not vanish identically. The
{\it conformal Yamabe constant} $\mu(M,G)$ of $(M,G)$ is defined by
\begin{equation*}
\mu(M,G) \definedas
\inf_{u \in C_c^{\infty}(M), u \not\equiv 0} \cF^G(u).
\end{equation*}
The conformal Yamabe constant is usually defined only for compact
manifolds; here we also allow non-compact manifolds in the definition.
This will turn out to be essential for studying surgery formulas for
Yamabe invariants of compact manifolds; see
Subsection~\ref{subsec.surg}. Also notice that the conformal Yamabe
constant for non-compact manifolds has been studied for instance in
\cite{kim:00} and \cite{grosse:p09}.

For compact $M$ one easily sees that
$\lim_{\ep\to 0} \cF^G (\sqrt{u^2+\ep^2}) = \cF^G (u)$; thus we obtain
\begin{equation*}
\mu(M,G)
=
\inf_{u\in C^\infty_+(M)} \cF^G (u) > -\infty,
\end{equation*}
where $C^\infty_+(M)$ denotes the space of positive smooth
functions. According to the resolution of the Yamabe problem
\cite{trudinger:68, aubin:76, schoen:84}, see for example
\cite{lee.parker:87} for a good overview article,
this infimum is always attained by a positive smooth function if
$M$ is a compact manifold.

For a compact manifold $M$ one also defines for any metric $G$
the (normalized) Einstein-Hilbert functional $\cE$ as
\begin{equation*}
\cE(G)
\definedas
\frac{\int_M \scal^G\, dv^G}{\vol^G(M)^{\frac{m-2}m}}.
\end{equation*}
These functionals are closely related to each other; namely, if $u>0$
and $\witi G = u^{4/(m-2)} G$, then
\begin{equation*}
\cE(\witi G) = \cF^G (u).
\end{equation*}
{}From the discussion above it follows that the functional $\cE$ always
attains its infimum in each conformal class $[G]$. Such
minimizing metrics are called \emph{Yamabe metrics}. Obviously $\witi
G$ is a Yamabe metric if and only if $\la \witi G$ is a Yamabe metric
for any $\la>0$. Thus any conformal class on a compact manifold
carries a Yamabe metric of volume~$1$. Yamabe metrics $\witi G$ are
stationary points of $\cE$, restricted to the conformal class, and
thus satisfy an Euler-Lagrange equation. This Euler-Lagrange equation
says precisely that the scalar curvature of $\witi G$ is constant. One
also sees that $\mu(M,G)$ is positive if and only if $[G]$ contains a
metric of positive scalar curvature.

We denote the standard flat metric on $\mR^m$ by $\xi^m$. The induced
metric on the sphere $S^m \subset \mR^{m+1}$ will be denoted by
$\rho^m$. This metric is a Yamabe metric, and the whole orbit of
$\rho^m$ under the action of the M\"obius group ${\rm Conf}(S^m) =
{\rm  PSO}(m+1,1)$ consists of Yamabe metrics. Thus
$\mS^m \definedas (S^m, \rho^m)$ carries a non-compact space of Yamabe
metrics of volume~$1$. It follows easily that $\mu(S^m,\rho^m) =
m(m-1) \om_m^{2/m}$, where $\om_m$ is the volume of $(S^m,\rho^m)$.

To determine the conformal Yamabe constant $\mu(M,G)$ it is obviously
sufficient to know a Yamabe metric in $[G]$ explicitly.
This is often easy to obtain in the following cases:
\begin{enumerate}[(i)]
\item \label{item1}
If $M$ is compact and connected, and if $\mu(M,G)\leq 0$, then it
follows from the maximum principle that there is a unique metric
$g_0 \in [G]$ of volume~$1$ and constant scalar curvature $s_0$. This
then implies that $\mu(M,G)=s_0$.
\item \label{item2}
Assume that $(M,G)$ is a connected compact Einstein manifold, and
$(M,G)$ is non-isometric to $(S^m,\la \rho^m)$ for any $\la >0$. Then
it was proven by Obata \cite[Prop.~6.2]{obata:71.72} that $G$ contains
a unique metric $g_0$ of volume~$1$ and constant scalar curvature
$s_0$. Again  $\mu(M,G)=s_0$.
\item \label{item3}
If $(M,\ti G)$ is a metric of constant scalar curvature $\ti s$ which
is close in the $C^{2,\al}$-topology to an Einstein manifold $(M,G)$
as in \eqref{item2}, then it is proven by \cite[Theorem C]{boehm.wang.ziller:04}
that $\ti G$ is also a Yamabe metric, and thus $\mu(M,\ti G) =
\ti s \; \vol(M,\ti G)^{2/m}$. This applies for example to
$(M,\ti G) = (S^m,\rho^m)\times (S^m, (1+\ep)\rho^m)$, $\ep$ close to
$0$. It also follows from the arguments in \cite{boehm.wang.ziller:04}
that $\ti G$ is then (up to rescaling) the only Yamabe metric in
$[\ti G]$. However, it is hard to decide in this situation whether
$\ti G$ is (up to rescaling) the only metric of constant scalar
curvature in $[\ti G]$. An affirmative answer to this problem was
given recently in \cite[Theorem~5]{lima.piccione.zedda:p11} if at
least one of the following additional conditions is satisfied:
(a)~$m\leq 7$; (b)~$m\leq 24 $ and $M$ is spin; or (c)~$|W|+|\na W|$
is a positive function.
\end{enumerate}

%The conformal Yamabe constant $\mu(M,G)$ is also easy to determine
%explicitly if there is only one Yamabe metric of volume $1$ in the
%conformal class~$[G]$. This is the case if at least one of the
%following conditions is satisfied.
%\begin{enumerate}
%\item $M$ is compact and $\mu(M,G)\leq 0$. The uniqueness then follows
%  from the maximum principle.
%\item $(M,G)$ is a connected compact Einstein manifold, and $(M,G)$ is
%  non-isometric to $(S^m,\la \rho^m)$ for any $\la >0$. This case is
%  one of Obata's theorems \cite[Prop.~6.2]{obata:71.72}.
%\item $(M,G)$ is close in the $C^{2,\al}$-topology to an Einstein
%  metric as in Obata's theorem above, see
%  \cite[Theorem C]{boehm.wang.ziller:04}.
%\end{enumerate}

%Conditions (1) and (2) even yield the following stronger uniqueness
%statement: There is only one metric of constant scalar curvature and
%of volume~$1$ in~$[G]$. It was recently shown in
%\cite[Theorem~5]{lima.piccione.zedda:p11} that this stronger
%uniqueness statement also holds in (3) if at least one of the
%following additional conditions is satisfied: (a) $m\leq 7$;
%(b) $m\leq 24 $ and $M$ is spin; or (c) $|W|+|\na W|$ is a positive
%function.

%In $[\rho^m]$ any metric of constant scalar curvature
%$\kappa$ is in the orbit of the M\"obius group acting on
%$\la \rho^m$, where $\kappa = m (m-1)/\la$. As a consequence, on the
%round sphere any constant scalar curvature metric is a Yamabe metric
%as well.

We have seen that in some particular cases, $\mu(M,g)$ can be
explicitly calculated. In general the determination of
$\mu(M,g)$ is a difficult task, as in most cases it is unclear
whether a given constant scalar curvature metric in~$[G]$ is a Yamabe
metric. The functionals $\cE|_{[G]}$ and $\cF^G |_{C^\infty_+(M)}$ may
have non-mini\-mi\-zing stationary points. These stationary points are
thus metrics of constant scalar curvature which are not Yamabe
metrics. The simplest such example, extensively discussed by Schoen
\cite{schoen:91} for $w=1$, is the metric $G = \rho^v + \la \rho^w$ on
$S^v \times S^w$, $v\geq 2$, which has constant scalar curvature
$v(v-1) + (1/\la) w(w-1)$, but which is not a Yamabe metric for
sufficiently large $\la$. This is due to the fact that
$\mu(M,G) \leq \mu(\mS^m)$, which follows from a standard
test function argument, whereas $\cE( \rho^v + \la \rho^w )
\to \infty$  as $\la \to\infty$ when $v\geq 2$.

In conclusion, if $(M,G)$ is an explicitly given compact manifold of
constant scalar curvature, then the calculation of $\mu(M,G)$ is easy if
either $(M,G)$ is Einstein or if $\mu(M,G)\leq 0$, but in general it
can be a hard problem.

%%%%%%%%%%%%%%%%%%%%%%%%%%%%%%%%%%%%%%%%%%%%%%%%%%%%%%%%%%%%%%%%%%%%%%%%%
\subsection{Product manifolds}
%%%%%%%%%%%%%%%%%%%%%%%%%%%%%%%%%%%%%%%%%%%%%%%%%%%%%%%%%%%%%%%%%%%%%%%%%

We now consider Riemannian product manifolds; that is, for Riemannian
manifolds $(V,g)$ and $(W,h)$ of dimensions $v$ and $w$,
we equip $M = V \times W$ with the product metric $G = g + h$, or more
generally $G = g + \la h$, where $\la > 0$. We ask the following
question.

\begin{question}
Suppose $V$ and $W$ are compact and equipped with Yamabe metrics $g$
and $h$. Let $\la > 0$. Is then $g + \la h$ also a Yamabe metric?
\end{question}

{}From the discussion on uniqueness above it follows that the answer
is yes,
\begin{itemize}
\item if $v,w\geq 3$, $\mu(V,g)\leq 0$ and $\mu(W,h)\leq 0$;
\item or if $v,w\geq 3$, $\mu(V,g)> 0$ and $\mu(W,h)< 0$ for $\la>0$
small enough;
\item or if $(V,g)$ and $(W,h)$ are both Einstein with
$\frac{1}{v} \Scal^g$ close to $\frac{1}{\la w} \Scal^h$.
\end{itemize}

If the answer to the above question %\ref{q1}
is yes, then one deduces
\begin{equation}\label{naive.product.formula}
\mu(V\times W,g+\la h)
=
\left(\frac{\mu(V,g)}{\vol^g (V)^{2/v}}
+ \frac{\mu(W,h)}{\vol^{\la h} (W)^{2/w}}\right)
\Bigl( \vol^g (V) \vol^{\la h} (W) \Bigr)^{\frac 2{v+w}}.
\end{equation}
On the other hand if $g$ has positive scalar curvature, then
$\cE(g+\la h)\to \infty$ for $\la\to \infty$; thus $g + \la h$ is not
a Yamabe metric for large $\la$. This applies, in particular, to the
cases $\mu(V,g)>0$, $v\geq 3$, or if $(V,g) = (S^2, \rho^2)$.

%%%%%%%%%%%%%%%%%%%%%%%%%%%%%%%%%%%%%%%%%%%%%%%%%%%%%%%%%%%%%%%%%%%%%%%%%
\subsection{An intuitive---but incorrect---argument in the positive case}
%%%%%%%%%%%%%%%%%%%%%%%%%%%%%%%%%%%%%%%%%%%%%%%%%%%%%%%%%%%%%%%%%%%%%%%%%

Now we assume $v,w\geq 3$, $\mu(V,g)>0$, and $\mu(W,h)>0$. We already
explained why $g+\la h$ is not a Yamabe metric for large (and small)
$\la>0$, and as a consequence Equation~\eqref{naive.product.formula} cannot
be true for all $\la>0$. Despite this fact, assume for a moment
that \eqref{naive.product.formula} were true for all $\la>0$. We then
could minimize over $\la$, and we would obtain
\begin{equation} \label{naive.product.concl}
\inf_{\la\in (0,\infty)} \mu(V\times W,g+\la h)
=
(v+w) \left(\frac{\mu(V,g)}v\right)^{\frac v{v+w}}
\left(\frac{\mu(W,h)}w\right)^{\frac w{v+w}}.
\end{equation}

%%%%%%%%%%%%%%%%%%%%%%%%%%%%%%%%%%%%%%%%%%%%%%%%%%%%%%%%%%%%%%%%%%%%%%%%%
\subsection{Main result}
%%%%%%%%%%%%%%%%%%%%%%%%%%%%%%%%%%%%%%%%%%%%%%%%%%%%%%%%%%%%%%%%%%%%%%%%%

Although the \emph{naive derivation} of formula~\eqref{naive.product.concl}
used incorrect assumptions, our main result, Theorem~\ref{main},
will tell us that the \emph{formula itself} is correct up to a factor
\begin{equation*}
\ep_{v,w} =
\frac{a_{v+w}}{{a_v}^{v/(v+w)}{a_w}^{w/(v+w)}}
< 1,
\end{equation*}
assuming the mild condition \eqref{assump}.

More precisely, we assume that $V$ and $W$ are compact manifolds
of dimension at least $3$, with Yamabe metrics $g$ and $h$ of positive
conformal Yamabe constant. In particular, condition~\eqref{assump} is
satisfied.
Then Theorem~\ref{main} implies that
\begin{equation*}
\ep_{v,w} \leq
\frac{\inf_{\la\in (0,\infty)} \mu(V\times W,g+\la h)}
{(v+w) \left(\frac{\mu(V,g)}v\right)^{\frac v{v+w}}
\left(\frac{\mu(W,h)}w\right)^{\frac w{v+w}}}
\leq 1 .
\end{equation*}
%Thus the product metric is minimizing up to the $\ep_{v,w}$-factor.
Note that $\ep_{v,w}\to 1$ for $v,w\to \infty$.
See Figure~\ref{fig.ep} for some values of $\ep_{v,w}$.

\begin{figure}
\begin{tabular}{r||l|l|l|l|l}
  $\ep_{v,w}$   & $w=3$       & $w=4$      & $w=5$ & $w=6$ & $w=7$ \\
\hline\hline
$v = 3$ & 0.625     & 0.7072.. & 0.7515.. & 0.7817.. & 0.8042.. \\
   4 & 0.7072..  & 0.7777.. & 0.8007.. & 0.8367.. & 0.8537.. \\
   5 & 0.7515..  & 0.8007.. & 0.8427.. & 0.8631.. & 0.8772.. \\
   6 & 0.7817..  & 0.8367.. & 0.8631.. & 0.88     & 0.8921.. \\
   7 & 0.8042..  & 0.8537.. & 0.8772.. & 0.8921.. & 0.9027.. \\
\end{tabular}
\caption{Values of $\ep_{v,w}$}\label{fig.ep}
\end{figure}

The main theorem also applies to many non-compact manifolds;
see Theorem~\ref{main}.

%%%%%%%%%%%%%%%%%%%%%%%%%%%%%%%%%%%%%%%%%%%%%%%%%%%%%%%%%%%%%%%%%%%%%%%%%
\subsection{Further comments on  related literature}
%%%%%%%%%%%%%%%%%%%%%%%%%%%%%%%%%%%%%%%%%%%%%%%%%%%%%%%%%%%%%%%%%%%%%%%%%

Our main motivation to study Yamabe constants of products is the
application sketched in Subsection~\ref{subsec.surg}.

Fundamental results on Yamabe constants on products have been found in
the interesting article \cite{akutagawa.florit.petean:07}, where it is,
among other things, shown that the conformal Yamabe constant of the
product $V \times \mR^w$ is a lower bound for $\si(V \times W)$. Here
$\si$ denotes the smooth Yamabe invariant defined in
Subsection~\ref{subsec.smooth}. This article also emphasized the
importance of the question under which conditions
a function $u \in C^\infty(V\times W)$ minimizing $\cF$ is a function
of only one of the factors. In \cite{akutagawa.florit.petean:07} it is
proved that if $(V,g)$ is compact and of constant scalar
curvature $1$, then the infimum of the Yamabe functional of
$V \times \mR^w$ restricted to functions depending only on $\mR^w$ is
up to a constant the inverse of an optimal constant in a
Gagliardo-Nirenberg type estimate.

In related research, Petean  \cite{petean:09a}
derived a lower bound for the conformal
Yamabe constant of product manifolds $V\times \mR$, where $V$ is
compact of positive Ricci curvature. If additionally we require $V$ to
be Einstein, any minimizer
$u\in C^\infty(V\times \mR)$ of $\cF$ only depends on $\mR$.
As a corollary, Petean obtained
lower bounds for the smooth Yamabe invariant $\si(V \times S^1)$ in
this case.

This result of Petean contrasts nicely to  Theorem~\ref{main}. Whereas
Petean's result requires that one of the factors is $1$-dimensional,
our  Theorem~\ref{main} requires both factors to be of dimension at
least $3$.

In \cite{petean.ruiz:p10} an explicit lower bound for
$\mu(S^2\times \mR^2,\rho^2 + \xi^2)$ is obtained:
$\mu(S^2\times \mR^2, \rho^2 + \xi^2)
\geq 0.68 \cdot Y(S^4)$. % More exactly 0.68009...
A similar but weaker result was obtained in \cite{petean:01}.

Several recent publications study multiplicity phenomena on products
$S^v\times W$ equipped with the product metric of the standard metric on
$S^v$ with a metric of constant scalar curvature $s>0$ on
$W$. Explicit lower bounds for the number of metrics of constant
scalar curvature $1$ in the conformal class $[g_0]$ are derived, and
these bounds grow linearly in  $\sqrt{s}$. The case $v=1$ was studied
in \cite{lbb.kaas:a,lbb.kaas:b}; the general case then treated in
\cite{petean:10}. In the recent preprint \cite{henry.petean:p11},
isoparametric hypersurfaces are used in order to obtain new metrics of
constant scalar curvature in the conformal class of
products of Riemannian manifolds, e.g. the conformal class of
$(S^3\times S^3,\rho^3+\lambda \rho^3)$.

%%%%%%%%%%%%%%%%%%%%%%%%%%%%%%%%%%%%%%%%%%%%%%%%%%%%%%%%%%%%%%%%%%%%%%%%%
\subsection{Structure of the present article}
%%%%%%%%%%%%%%%%%%%%%%%%%%%%%%%%%%%%%%%%%%%%%%%%%%%%%%%%%%%%%%%%%%%%%%%%%

In Section~\ref{Section_product} we derive the main techniques and the
main result of the article. We use mixed
$L^{p,q}$-spaces in
order to obtain a lower bound of the conformal Yamabe constants in
the case that both factors have dimension at least $3$. We start with
a proof of an iterated H\"older inequality in
Subsection~\ref{iterated} which is well-adapted for the proof of our
product formula in Subsection~\ref{subsec.main}, which is the main
result of the article.

In Section~\ref{Section_applications} we discuss applications. In
Subsection~\ref{subsec.smooth} we find an estimate for the smooth
Yamabe invariant of product manifolds. Subsection~\ref{subsec.surg}
explains our original motivation for the subject, which is to find
better estimates for the constants appearing in the surgery formula in
\cite{ammann.dahl.humbert:p08a}. In Subsection~\ref{subsec.stable} we
define a stable Yamabe invariant and show that a similar surgery
formula as in the unstable situation holds true.

%In Subsections~\ref{subsec.ring}
%we show that our main result allows to define interesting algebraic
%structures on the borderline of geometry and algebraic
%topology which might initiate interesting results in future.

%%%%%%%%%%%%%%%%%%%%%%%%%%%%%%%%%%%%%%%%%%%%%%%%%%%%%%%%%%%%%%%%%%%%%%%%%
\section{Yamabe constants of product metrics}
\label{Section_product}
%%%%%%%%%%%%%%%%%%%%%%%%%%%%%%%%%%%%%%%%%%%%%%%%%%%%%%%%%%%%%%%%%%%%%%%%%

%%%%%%%%%%%%%%%%%%%%%%%%%%%%%%%%%%%%%%%%%%%%%%%%%%%%%%%%%%%%%%%%%%%%%%%%%
\subsection{Iterated H\"older inequality for product manifolds}
\label{iterated}
%%%%%%%%%%%%%%%%%%%%%%%%%%%%%%%%%%%%%%%%%%%%%%%%%%%%%%%%%%%%%%%%%%%%%%%%%

Let $(V,g)$ and $(W,h)$ be Riemannian manifolds of dimensions
$v \definedas \dim V$ and $w \definedas \dim W$. We set
\begin{equation*}
(M,G) \definedas (V \times W, g + h),
\end{equation*}
so that $m \definedas \dim M = v + w$. We do not assume that the
manifolds are complete. The first result we will need is a kind of
iterated H\"older inequality for $(M,G) \definedas
(V \times W, g + h)$.

\begin{lemma} \label{holder}
For any function $u \in C_c^{\infty}(M)$ we have
\begin{equation*}
\left( \int_M |u|^{p_m} \, dv^G \right)^{\frac{2}{p_m}}
\leq
\left( \int_V
\left( \int_W |u|^{p_w} \, dv^h \right)^{\frac{2}{p_w} }
\, dv^g \right)^{\frac{w}{m} }
\left( \int_V
\left( \int_W |u|^2 \, dv^h \right)^{\frac{p_v}{2}}
\, dv^g \right)^{\frac{v-2}{m}} .
\end{equation*}
\end{lemma}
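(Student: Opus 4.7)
The plan is to apply H\"older's inequality twice --- once on each factor --- after an application of Fubini's theorem. By Fubini,
\begin{equation*}
\int_M |u|^{p_m}\, dv^G = \int_V \int_W |u|^{p_m}\, dv^h\, dv^g,
\end{equation*}
so it suffices to bound the inner integral and then the outer one.

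First, for each fixed point in $V$, I would interpolate $|u|^{p_m}$ between $|u|^{p_w}$ and $|u|^{2}$ via H\"older on $W$. Writing $|u|^{p_m}=|u|^{a}\cdot|u|^{p_m-a}$ and seeking H\"older exponents $p,q$ with $1/p+1/q=1$, $ap=p_w$, and $(p_m-a)q=2$, the identities $p_m-2=4/(m-2)$ and $p_w-2=4/(w-2)$ force $p=(m-2)/(w-2)$ and $q=(m-2)/v$. This yields
\begin{equation*}
\int_W |u|^{p_m}\, dv^h \leq \left(\int_W |u|^{p_w}\, dv^h\right)^{(w-2)/(m-2)} \left(\int_W |u|^{2}\, dv^h\right)^{v/(m-2)}.
\end{equation*}

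Next, I would integrate this pointwise inequality over $V$ and apply H\"older on $V$ with exponents $r=(m-2)/w$ and $r'=(m-2)/(v-2)$. These are conjugate precisely because $v+w=m$, and they are chosen so that the power on the first inner factor becomes $(w-2)/(m-2)\cdot r=(w-2)/w=2/p_w$, while the power on the second becomes $v/(m-2)\cdot r'=v/(v-2)=p_v/2$, exactly matching the inner exponents appearing in the statement.

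Finally, since $2/p_m=(m-2)/m$, raising both sides to the power $2/p_m$ converts the outer exponents $1/r=w/(m-2)$ and $1/r'=(v-2)/(m-2)$ into the claimed $w/m$ and $(v-2)/m$, completing the proof. The only substantive difficulty is exponent bookkeeping: each H\"older exponent is uniquely determined by the requirement that the final inner and outer powers land on the prescribed values $2/p_w$, $p_v/2$, $w/m$, and $(v-2)/m$. Once these are correctly identified, no analytic input beyond Fubini and H\"older is required.
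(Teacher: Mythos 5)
Your proof is correct and follows essentially the same route as the paper: an inner H\"older inequality on $W$ interpolating $|u|^{p_m}$ between $|u|^{p_w}$ and $|u|^2$ with conjugate exponents $\frac{m-2}{w-2}$ and $\frac{m-2}{v}$, followed by an outer H\"older inequality on $V$ with exponents $\frac{m-2}{w}$ and $\frac{m-2}{v-2}$, and a final rescaling by the power $2/p_m=(m-2)/m$. All the exponent bookkeeping checks out.
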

The lemma is actually a special case of the H\"older inequality for mixed
$L^{p,q}$-spaces. See \cite{benedek.panzone:61} for further information on
such spaces.

\begin{proof}
By the H\"older inequality we have
\begin{equation*}
\int_{W} |u|^{p_m} \, dv^h
\leq
\left( \int_W |u|^{p_w} \, dv^h \right)^{\frac{w-2}{m-2}}
\left( \int_W |u|^{2} \, dv^h \right)^{\frac{v}{m-2}}.
\end{equation*}
We integrate this inequality over $(V,g)$ and use the following
H\"older inequality:
\begin{equation*}
\int_V \alpha \beta \, dv^g
\leq
\left(
\int_V |\alpha|^{\frac{m-2}{w}} \, dv^g
\right)^{\frac{w}{m-2}}
\left(
\int_V |\beta|^{\frac{m-2}{v-2}} \, dv^g
\right)^{\frac{v-2}{m-2}}
\end{equation*}
with
\begin{equation*}
\alpha \definedas
\left( \int_W |u|^{p_w} \, dv^h \right)^{\frac{w-2}{m-2}}
\; \hbox{ and } \;
\beta \definedas
\left( \int_W |u|^{2} \, dv^h \right)^{\frac{v}{m-2}}.
\end{equation*}
This proves Lemma \ref{holder}.
\end{proof}

%%%%%%%%%%%%%%%%%%%%%%%%%%%%%%%%%%%%%%%%%%%%%%%%%%%%%%%%%%%%%%%%%%%%%%%%%
\subsection{A lemma about integration and derivation}
%%%%%%%%%%%%%%%%%%%%%%%%%%%%%%%%%%%%%%%%%%%%%%%%%%%%%%%%%%%%%%%%%%%%%%%%%

Second we need a lemma concerning the interchange of derivation and
taking (a partial) $L^2$-norm.

\begin{lemma} \label{comparing_deriv}
Let $u \in C_c^\infty(M)$, $u \not\equiv 0$, and set
\begin{equation*}
\ga \definedas
\left( \int_W u^2  \, dv^h \right)^{\frac{1}{2}}.
\end{equation*}
Then
\begin{equation} \label{cdr}
\int_V |d\ga|_g^2 \, dv^g
\leq
\int_M |du|_g^2 \, dv^G.
\end{equation}
\end{lemma}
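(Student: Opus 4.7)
The strategy is to differentiate $\gamma$ under the integral sign, control $|d\gamma|_g$ pointwise by Cauchy--Schwarz in the $W$-variable, and conclude by integrating over $V$ using Fubini. Recall that on $M=V\times W$ equipped with the product metric, $|du|_G^2=|d_V u|_g^2+|d_W u|_h^2$, so the right-hand side of \eqref{cdr}, written as $\int_M |du|_g^2\,dv^G$, is to be read as $\int_V\int_W|d_V u|_g^2\,dv^h\,dv^g$, where $d_V u(x,y)\in T_x^*V$ is the partial differential of $u$ in the $V$-direction.

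The function $\gamma^2(x)=\int_W u(x,y)^2\,dv^h(y)$ is smooth on $V$: since $u$ has compact support, one may differentiate under the integral to obtain
\begin{equation*}
d(\gamma^2)(x)=2\int_W u(x,y)\,d_V u(x,y)\,dv^h(y).
\end{equation*}
On the open set $\Omega\definedas\{x\in V:\gamma(x)>0\}$, $\gamma$ is smooth and $d\gamma=\frac{1}{\gamma}\int_W u\,d_V u\,dv^h$. Applying the triangle inequality for covector-valued integrals and then Cauchy--Schwarz in $L^2(W,dv^h)$ yields, pointwise on $\Omega$,
\begin{equation*}
|d\gamma(x)|_g\leq\frac{1}{\gamma(x)}\int_W|u(x,y)|\,|d_V u(x,y)|_g\,dv^h(y)\leq\left(\int_W|d_V u(x,y)|_g^2\,dv^h(y)\right)^{1/2}.
\end{equation*}
Squaring and integrating this estimate over $\Omega\subset V$, then applying Fubini, gives the desired bound restricted to $\Omega$; since the right-hand side of \eqref{cdr} is already the full double integral, this is enough once we know that $|d\gamma|_g$ contributes nothing outside $\Omega$.

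The main obstacle is the zero set of $\gamma$: at boundary points of $\Omega$ the square root $\gamma=\sqrt{\gamma^2}$ need not be differentiable, so the pointwise expression for $d\gamma$ breaks down. I would handle this by regularization. For $\epsilon>0$ set $\gamma_\epsilon\definedas(\gamma^2+\epsilon^2)^{1/2}$, which is smooth on all of $V$, and repeat the computation above to obtain
\begin{equation*}
|d\gamma_\epsilon(x)|_g^2=\frac{1}{\gamma_\epsilon(x)^2}\left|\int_W u\,d_V u\,dv^h\right|_g^{\!2}\leq\frac{\gamma(x)^2}{\gamma_\epsilon(x)^2}\int_W|d_V u|_g^2\,dv^h\leq\int_W|d_V u|_g^2\,dv^h,
\end{equation*}
uniformly in $\epsilon$. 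Integrating over $V$ (which is legitimate since $u$ is compactly supported, hence $\gamma_\epsilon$ has compact gradient support) and applying Fubini on the right already gives $\int_V|d\gamma_\epsilon|_g^2\,dv^g\leq\int_M|du|_g^2\,dv^G$. Finally, I would let $\epsilon\to 0$: since $\gamma_\epsilon\to\gamma$ in $C^0$ and $d\gamma_\epsilon\to d\gamma$ pointwise on $\Omega$ while remaining uniformly bounded, a standard application of Fatou's lemma (together with the fact that $d\gamma=0$ almost everywhere on the interior of $V\setminus\Omega$, since $\gamma$ attains its minimum there) yields \eqref{cdr}.
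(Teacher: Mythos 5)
Your proof is correct and takes essentially the same route as the paper: differentiate under the integral sign, bound the resulting covector by Cauchy--Schwarz in $L^2(W,dv^h)$ to get the pointwise estimate $|d\gamma|_g^2\leq\int_W|d_Vu|_g^2\,dv^h$, and integrate over $V$. The only difference is cosmetic: the paper phrases the pointwise bound in terms of vector fields $X$ tangent to $V$ and dismisses the zero set of $\gamma$ with an ``almost everywhere'' remark, whereas you regularize via $\gamma_\epsilon=(\gamma^2+\epsilon^2)^{1/2}$ and pass to the limit with Fatou --- a more careful execution of the same argument.
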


\begin{proof}
Take any vector field $X$ on $M$ tangent to $V$. One has $g$-almost
everywhere (except on the boundary of $\ga^{-1}(0)$)
\begin{equation*}
|X \ga|^2
\leq
\left(
\frac{\int_W u X u \, dv^h}
{ \left(\int_W u^2 \,dv^h \right)^\frac{1}{2} }
\right)^2
\leq
\int_W (X u)^2  \, dv^h ,
\end{equation*}
where we used the Cauchy-Schwartz inequality
\begin{equation*}
\int_W u X u \, dv^h
\leq
{\left(\int_W (X u)^2 \, dv^h\right)}^{\frac{1}{2}}
{\left(\int_W  u^2 \, dv^h\right)}^{\frac{1}{2}}.
\end{equation*}
Integrating over $V$, we deduce that
\begin{equation*}
\int_V |X \ga|^2 \, dv^g \leq \int_M |X u|^2 \, dv^G.
\end{equation*}
Since this holds for any $X$ tangent to $V$, inequality \eqref{cdr}
follows.
\end{proof}

%%%%%%%%%%%%%%%%%%%%%%%%%%%%%%%%%%%%%%%%%%%%%%%%%%%%%%%%%%%%%%%%%%%%%%%%%
\subsection{Conformal Yamabe constant of product metrics}
\label{subsec.main}
%%%%%%%%%%%%%%%%%%%%%%%%%%%%%%%%%%%%%%%%%%%%%%%%%%%%%%%%%%%%%%%%%%%%%%%%%

We now state and prove our main theorem. It will turn out that the
following modified invariant is convenient when studying products of
Riemannian manifolds with non-negative Yamabe constant. If
$\mu(M,G) \geq 0$ we set
\begin{equation*}
\nu(M,G) \definedas
\left( \frac{\mu(M,G)}{m a_m} \right)^m .
\end{equation*}

\begin{theorem} \label{main}
Let $(V,g)$ and $(W,h)$ be Riemannian manifolds of dimensions
$v,w \geq 3$. Assume that $\mu(V,g), \mu(W,h) \geq 0$ and that
\begin{equation} \label{assump}
\frac{\Scal^g + \Scal^h}{a_m}
\geq
\frac{\Scal^g}{a_v} + \frac{\Scal^h}{a_w} .
\end{equation}
Then,
\begin{equation*}
\mu(M,G) \geq
\frac{m a_m}{(v a_v)^{\frac{v}{m}} (w a_w)^{\frac{w}{m}}}
\mu(V,g)^{\frac{v}{m}} \mu(W,h)^{\frac{w}{m}},
\end{equation*}
or, equivalently,
\begin{equation*}
\nu(M,G) \geq \nu(V,g) \nu(W,h).
\end{equation*}
\end{theorem}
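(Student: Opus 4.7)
\smallskip
\noindent\textbf{Proof plan.} The plan is to take an arbitrary test function $u\in C_c^\infty(M)$, bound its Yamabe functional $\cF^G(u)$ from below by separately handling the $V$-directions and the $W$-directions, and then combine the two bounds via Lemma~\ref{holder} and Young's inequality. Throughout, write $|du|_G^2 = |d_V u|_g^2 + |d_W u|_h^2$, where $d_V u$ and $d_W u$ denote the partial differentials along the factors, and recall that the product metric satisfies $\Scal^G = \Scal^g + \Scal^h$ and $dv^G = dv^g\,dv^h$.

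\smallskip
\noindent\textbf{Step 1: decomposition of the numerator.} Assumption~\eqref{assump} allows me to replace $\Scal^g+\Scal^h$ by $\tfrac{a_m}{a_v}\Scal^g+\tfrac{a_m}{a_w}\Scal^h$ inside the Yamabe functional and obtain the lower bound
\begin{equation*}
\int_M\!\bigl(a_m|du|_G^2+\Scal^G u^2\bigr)\,dv^G
\ \geq\
\tfrac{a_m}{a_v}\!\int_M\!\bigl(a_v|d_V u|_g^2+\Scal^g u^2\bigr)\,dv^G
\,+\,\tfrac{a_m}{a_w}\!\int_M\!\bigl(a_w|d_W u|_h^2+\Scal^h u^2\bigr)\,dv^G.
\end{equation*}
For the second term, for each $x\in V$ I apply the definition of $\mu(W,h)$ to the slice $u(x,\cdot)\in C_c^\infty(W)$, obtaining an integrand bounded below by $\mu(W,h)\,\bigl(\int_W |u|^{p_w}\,dv^h\bigr)^{2/p_w}$ on $V$. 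For the first term, I introduce $\gamma(x)\definedas\bigl(\int_W u^2\,dv^h\bigr)^{1/2}$; then $\int_M \Scal^g u^2\,dv^G=\int_V \Scal^g\gamma^2\,dv^g$, Lemma~\ref{comparing_deriv} gives $\int_V|d\gamma|_g^2\,dv^g\leq\int_M |d_V u|_g^2\,dv^G$, and applying the definition of $\mu(V,g)$ to $\gamma$ (which lies in the appropriate Sobolev completion of $C_c^\infty(V)$ even though it may fail to be smooth on $\{\gamma=0\}$) bounds the first term from below by $\tfrac{a_m}{a_v}\mu(V,g)\,\bigl(\int_V \gamma^{p_v}\,dv^g\bigr)^{2/p_v}$.

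\smallskip
\noindent\textbf{Step 2: combining with Lemma~\ref{holder} and Young's inequality.} Introduce the abbreviations
\begin{equation*}
P\definedas\int_V\!\Bigl(\int_W u^2\,dv^h\Bigr)^{\!p_v/2}dv^g,\qquad
Q\definedas\int_V\!\Bigl(\int_W|u|^{p_w}\,dv^h\Bigr)^{\!2/p_w}dv^g.
\end{equation*}
Step~1 yields $\cF^G(u)\geq \bigl(\tfrac{a_m\mu(V,g)}{a_v}\,P^{2/p_v}+\tfrac{a_m\mu(W,h)}{a_w}\,Q\bigr)\big/D$, where $D\definedas\bigl(\int_M|u|^{p_m}dv^G\bigr)^{2/p_m}$. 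Lemma~\ref{holder} gives $D\leq Q^{w/m}P^{(v-2)/m}$, and since $\tfrac{2}{p_v}\cdot\tfrac{v}{m}=\tfrac{v-2}{m}$, setting $s\definedas P^{2/p_v}$ and $t\definedas Q$ reduces the problem to minimizing $(\alpha s+\beta t)/(s^{v/m}t^{w/m})$ with $\alpha=\tfrac{a_m\mu(V,g)}{a_v}$, $\beta=\tfrac{a_m\mu(W,h)}{a_w}$. The weighted Young inequality with exponents $v/m$ and $w/m$ (applied to $\tfrac{m}{v}\alpha s$ and $\tfrac{m}{w}\beta t$) gives
\begin{equation*}
\alpha s+\beta t
\ \geq\
\frac{m}{v^{v/m}w^{w/m}}\,\alpha^{v/m}\beta^{w/m}\,s^{v/m}t^{w/m},
\end{equation*}
and substituting the values of $\alpha,\beta$ produces exactly the claimed factor $\tfrac{ma_m}{(va_v)^{v/m}(wa_w)^{w/m}}$ in front of $\mu(V,g)^{v/m}\mu(W,h)^{w/m}$.

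\smallskip
\noindent\textbf{Anticipated difficulties.} The two technical subtleties I expect to address are (i) the slicewise application of the Yamabe inequality on $W$, which needs the inequality to hold pointwise in $x\in V$ (including slices where $u(x,\cdot)\equiv0$, where it is trivial), and (ii) the limited regularity of $\gamma$ on its zero set, which prevents it from lying in $C_c^\infty(V)$; here one observes that the infimum defining $\mu(V,g)$ is unchanged when extended to the Sobolev class of nonnegative compactly supported functions to which $\gamma$ belongs. Apart from these standard measure-theoretic details, the argument is an algebraic combination of Lemma~\ref{holder}, Lemma~\ref{comparing_deriv}, and Young's inequality, and both Yamabe and assumption \eqref{assump} enter exactly once each.
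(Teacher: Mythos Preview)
Your proposal is correct and follows essentially the same route as the paper: decompose the numerator via assumption~\eqref{assump}, apply Lemma~\ref{comparing_deriv} together with the definition of $\mu(V,g)$ to the $V$-part and the definition of $\mu(W,h)$ slicewise to the $W$-part, then combine the two mixed norms using Lemma~\ref{holder} and Young's (weighted AM--GM) inequality. The only cosmetic differences are that the paper normalizes $\int_M |u|^{p_m}\,dv^G=1$ at the outset rather than carrying the denominator $D$, and it writes the Young step in the reverse direction (bounding $r\,a^{(v-2)/m}b^{w/m}$ above by the sum) after introducing the $\nu$-notation; your formulation of the minimization of $(\alpha s+\beta t)/(s^{v/m}t^{w/m})$ is the same computation.
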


Note that we do not assume that the manifolds are complete.

\begin{proof}
Take any non-negative function $u \in C_c^\infty(M)$ normalized by
\begin{equation} \label{normalization}
\int_M |u|^{p_m} \, dv^G = 1 .
\end{equation}
We then have
\begin{equation*}
\frac{1}{a_m} \cF^G(u)
=
\int_M \left( |du|_G^2 +\frac{\Scal^G}{a_m}u^2 \right) \, dv^G.
\end{equation*}
Using $|du|^2_G = |du|^2_g+|du|^2_h$ and
$\Scal^G = \Scal^g + \Scal^h$ together with \eqref{assump} we obtain
\begin{equation} \label{IGu}
\frac{1}{a_m} \cF^G(u)
\geq
\int_M \left( |du|^2_g + \frac{\Scal^g}{a_v} u^2 \right) \, dv^G
+
\int_V
\int_W \left( |du|_h^2 + \frac{\Scal^h}{a_w} u^2 \right) \, dv^h
\, dv^g.
\end{equation}
We set $\ga \definedas \left( \int_W u^2 \, dv^h \right)^\frac{1}{2}$.
For the first term here, Lemma \ref{comparing_deriv} and the
definition of $\mu(V,g)$ imply that
\begin{equation} \label{IGu1}
\begin{split}
\int_M \left( |du|^2_g + \frac{\Scal^g}{a_{v}} u^2 \right) \, dv^G
&\geq
\int_V
\left( |d\ga|_g^2 + \frac{\Scal^g}{a_{v}} \ga^2 \right)
\, dv^g \\
&\geq
\frac{1}{a_v} \mu(V,g)
\left( \int_{V} \ga^{p_v} \, dv^g \right)^{\frac{2}{p_v}} \\
&=
\frac{1}{a_v} \mu(V,g)
\left(
\int_V \left( \int_W |u|^2 \, dv^h \right)^{\frac{p_v}{2}} \, dv^g
\right)^{\frac{v-2}{v} }.
\end{split}
\end{equation}
For the second term we have
\begin{equation} \label{IGu2}
\begin{split}
\int_V \int_W
\left(|du|_h^2 + \frac{\Scal^h}{a_w} u^2 \right)
\, dv^h \, dv^g
\geq
\frac{1}{a_w} \mu(W,h)
\int_V
\left( \int_W u^{p_w} \, dv^h \right)^{\frac{2}{p_w}}
\, dv^g
\end{split}
\end{equation}
by the definition of $\mu(W,h)$. Plugging \eqref{IGu1} and
\eqref{IGu2} into \eqref{IGu} we get
\begin{equation} \label{IGu3}
\begin{split}
\cF^G(u)
&\geq
\frac{a_m}{a_v} \mu(V,g)
\left(
\int_V \left( \int_W |u|^2 \, dv^h \right)^{\frac{p_v}{2}} \, dv^g
\right)^{\frac{v-2}{v} } \\
&\qquad +
\frac{a_m}{a_w} \mu(W,h)
\int_V \left( \int_W u^{p_w} \, dv^h \right)^{\frac{2}{p_w}} \, dv^g.
\end{split}
\end{equation}
Set
\begin{equation*}
r \definedas
m a_m \nu(V,g)^{\frac{1}{m}} \nu(W,h)^{\frac{1}{m}}.
\end{equation*}
For $a,b > 0$ we compute
\begin{equation*}
\begin{split}
r a^\frac{v-2}{m} b^\frac{w}{m}
&=
r
\left(
\left( \frac{\nu(V,g)^w}{\nu(W,h)^v} \right)^{\frac{1}{m^2}}
a^\frac{v-2}{m}
\right)
\left(
\left( \frac{\nu(W,h)^v}{\nu(V,g)^w} \right)^{\frac{1}{m^2}}
b^\frac{w}{m}
\right) \\
&\leq
r \left[
\frac{v}{m}
\left( \frac{\nu(V,g)^{\frac{w}{v}}}{\nu(W,h)} \right)^{\frac{1}{m}}
a^\frac{v-2}{v}
+
\frac{w}{m}
\left( \frac{\nu(W,h)^{\frac{v}{w}}}{\nu(V,g)} \right)^{\frac{1}{m}}
b
\right] \\
&=
m a_m \nu(V,g)^{\frac{1}{m}} \nu(W,h)^{\frac{1}{m}}
\frac{v}{m}
\left( \frac{\nu(V,g)^{\frac{w}{v}}}{\nu(W,h)} \right)^{\frac{1}{m}}
a^\frac{v-2}{v} \\
&\qquad +
m a_m \nu(V,g)^{\frac{1}{m}} \nu(W,h)^{\frac{1}{m}}
\frac{w}{m}
\left( \frac{\nu(W,h)^{\frac{v}{w}}}{\nu(V,g)} \right)^{\frac{1}{m}}
b \\
&=
a_m v \nu(V,g)^{\frac{1}{v}} a^\frac{v-2}{v}
+ a_m w \nu(W,h)^{\frac{1}{w}} b \\
&=
\frac{a_m}{a_v} \mu(V,g) a^\frac{v-2}{v}
+ \frac{a_m}{a_w} \mu(W,h) b, \\
\end{split}
\end{equation*}
where in the second line we used Young's inequality
\begin{equation*}
cd \leq \frac{v}{m}c^{\frac{m}{v}} + \frac{w}{m}d^{\frac{m}{w}} ,
\end{equation*}
which is valid for any $c,d \geq 0$. Using the above in \eqref{IGu3}
with
\begin{equation*}
a \definedas
\int_V
\left( \int_W |u|^2 \, dv^h \right)^{\frac{p_v}{2}}
\, dv^g
\; \hbox{ and } \;
b \definedas
\int_V
\left( \int_W |u|^{p_w} \, dv^h \right)^{\frac{2}{p_w}}
\, dv^g,
\end{equation*}
we get
\begin{equation*}
\cF^G(u)
\geq
r
\left( \int_V
\left( \int_W |u|^2 \, dv^h \right)^{\frac{p_v}{2}}
\, dv^g \right)^\frac{v-2}{m}
\left( \int_V
\left( \int_W |u|^{p_w} \, dv^h \right)^{\frac{2}{p_w}}
\, dv^g \right)^\frac{w}{m} .
\end{equation*}
Using Lemma \ref{holder} and Relation \eqref{normalization}
we deduce
\begin{equation*}
\cF^G(u)
\geq r
= m a_m \nu(V,g)^{\frac{1}{m}} \nu(W,h)^{\frac{1}{m}}
= \frac{m a_m}{(v a_v)^{\frac{v}{m}} (w a_w)^{\frac{w}{m}}}
\mu(V,g)^{\frac{v}{m}} \mu(W,h)^{\frac{w}{m}}.
\end{equation*}
Since this holds for all $u$, Theorem~\ref{main} follows.
\end{proof}

%%%%%%%%%%%%%%%%%%%%%%%%%%%%%%%%%%%%%%%%%%%%%%%%%%%%%%%%%%%%%%%%%%%%%%%%%
\section{Applications}
\label{Section_applications}
%%%%%%%%%%%%%%%%%%%%%%%%%%%%%%%%%%%%%%%%%%%%%%%%%%%%%%%%%%%%%%%%%%%%%%%%%

%%%%%%%%%%%%%%%%%%%%%%%%%%%%%%%%%%%%%%%%%%%%%%%%%%%%%%%%%%%%%%%%%%%%%%%%%
\subsection{The smooth Yamabe invariant of product manifolds}
\label{subsec.smooth}
%%%%%%%%%%%%%%%%%%%%%%%%%%%%%%%%%%%%%%%%%%%%%%%%%%%%%%%%%%%%%%%%%%%%%%%%%

Let $M$ be a compact manifold of dimension $m \geq 3$. Then its
{\it smooth Yamabe invariant} is defined as
\begin{equation*}
\sigma(M) \definedas \sup \mu(M,G),
\end{equation*}
where the supremum runs over all Riemannian metrics $G$ on $M$.
This invariant of differentiable manifolds has the property that
$\si(M) \leq \si(S^m)$ for all $M$ and $\si(M) > 0$ if and only if
$M$ admits a metric with positive scalar curvature.

{}From Theorem~\ref{main} we obtain the following corollary.
\begin{cor} \label{cor1}
Let $V,W$ be compact manifolds of dimensions $v,w \geq 3$. Assume
$\sigma(V)\geq 0$. Then
\begin{equation*}
\sigma(V \times W)
\geq
\frac{m a_m}{(v a_v)^{\frac{v}{m}} (w a_w)^{\frac{w}{m}}}
\sigma(V)
^{\frac{v}{m}}
\sigma(S^w)
^{\frac{w}{m}},
\end{equation*}
where $m = v + w$.
\end{cor}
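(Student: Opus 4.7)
The plan is to bootstrap Theorem~\ref{main} through a non-compact factor: the key observation, proved in \cite{akutagawa.florit.petean:07} and recalled in the introduction, is that for any metric $g$ on the compact manifold $V$ one has the comparison
\begin{equation*}
\sigma(V \times W) \geq \mu(V \times \mR^w, g + \xi^w),
\end{equation*}
which effectively replaces $W$ by the flat $\mR^w$. One then applies Theorem~\ref{main} to $(V,g)$ and $(\mR^w, \xi^w)$, exploiting the fact, noted explicitly just after the statement of that theorem, that compactness/completeness of the factors is not required.

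More concretely, I would first dispose of the case $\sigma(V) = 0$, in which the claimed inequality is trivial, and then, for each $\epsilon \in (0, \sigma(V))$, choose a Yamabe metric $g$ on $V$ with $\mu(V,g) \geq \sigma(V) - \epsilon$. Because $\mu(V,g) > 0$, the resolution of the Yamabe problem ensures that $g$ has constant positive scalar curvature, so $\Scal^g \geq 0$ pointwise. The nonnegativity hypotheses $\mu(V,g), \mu(\mR^w,\xi^w) \geq 0$ of Theorem~\ref{main} are then immediate; moreover $\mu(\mR^w,\xi^w) = \mu(S^w, \rho^w) = \sigma(S^w)$, using the stereographic conformal equivalence of $(\mR^w,\xi^w)$ with $(S^w \setminus \{p\}, \rho^w)$ (a single removed point is irrelevant for $\mu$ via a standard cut-off argument) together with the classical fact that the round sphere realizes its own smooth Yamabe invariant. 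The mild assumption \eqref{assump} reduces, since $\Scal^{\xi^w} \equiv 0$, to $\Scal^g/a_m \geq \Scal^g/a_v$, which holds because $\Scal^g \geq 0$ and $n \mapsto a_n = 4 + 4/(n-2)$ is strictly decreasing in $n$, so $a_v \geq a_m$.

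Theorem~\ref{main} then yields
\begin{equation*}
\mu(V \times \mR^w, g + \xi^w) \geq \frac{m a_m}{(v a_v)^{v/m} (w a_w)^{w/m}}\, \mu(V,g)^{v/m}\, \sigma(S^w)^{w/m},
\end{equation*}
and combining with the comparison above and letting $\epsilon \to 0$ delivers the corollary. The only nonformal ingredient is the Akutagawa--Florit--Petean inequality $\sigma(V \times W) \geq \mu(V \times \mR^w, g + \xi^w)$; that step---obtained by spreading test functions on $V \times W$ across a conformally blown-up geodesic ball around a slice $V \times \{p\}$ so that the $W$-factor is replaced in the limit by flat Euclidean space near $p$---would be the main obstacle if it had to be reproved here, but since it is imported from \cite{akutagawa.florit.petean:07} the remainder of the argument is essentially formal.
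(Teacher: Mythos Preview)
Your argument for the case $\sigma(V)>0$ is essentially identical to the paper's: both use the Akutagawa--Florit--Petean limit $\lim_{t\to\infty}\mu(V\times W,g+t^2h)=\mu(V\times\mR^w,g+\xi^w)$ (valid for $g$ of positive scalar curvature), then apply Theorem~\ref{main} to the product $(V,g)\times(\mR^w,\xi^w)$, check \eqref{assump} via $a_v\geq a_m$ and $\Scal^{\xi^w}=0$, and use $\mu(\mR^w,\xi^w)=\sigma(S^w)$. One small imprecision: you state the AFP comparison ``for any metric $g$'', but the version quoted in the paper (and needed here) assumes $\Scal^g>0$; since you only invoke it for such $g$, this is harmless.

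The genuine gap is the case $\sigma(V)=0$, which you dismiss as trivial. The right-hand side of the corollary is then $0$, so you need $\sigma(V\times W)\geq 0$; this is \emph{not} automatic. When $\sigma(V)=0$ the manifold $V$ carries no metric of positive scalar curvature, so the AFP limit is unavailable and your main argument does not apply even in a limiting form. The paper handles this case separately: take unit-volume Yamabe metrics $g_i$ on $V$ with constant scalar curvature $\mu(V,g_i)\leq 0$, $\mu(V,g_i)\to 0$, pick $\epsilon_i\to 0$ with $\epsilon_i^{-w}\mu(V,g_i)\to 0$, and form $G_i=\epsilon_i^{w}g_i+\epsilon_i^{-v}h$ on $V\times W$ (unit volume, constant scalar curvature tending to $0$); from this one concludes $\mu(V\times W,G_i)\to 0$ and hence $\sigma(V\times W)\geq 0$. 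You should either supply this or a comparable argument rather than calling the case trivial.
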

\begin{proof}
We first consider the case $\si(V)>0$.
In \cite[Theorem 1.1]{akutagawa.florit.petean:07} it is proven that
\begin{equation*}
\lim_{t \to\infty} \mu(V \times W, g + t^2 h)
=
\mu(V \times \mR^w, g + \xi^w)
\end{equation*}
if $g$ is a metric on $V$ with positive scalar curvature and $h$ is
any metric on $W$. Since $a_v \geq a_m$ we see that
\eqref{assump} holds, so Theorem~\ref{main} together with
$\mu(\mR^w, \xi^w) = \mu(S^w, \rho^w)$ implies the corollary if $\si(V)>0$.

In the case $\si(V)=0$ there is a sequence of metrics $g_i$ on $V$
such that $\vol^{g_i}(V)=1$, $\mu(V,g_i)\leq 0$, and $\mu(V,g_i)\to 0$
as $i \to \infty$. From the solution of the Yamabe problem we can
assume that all $g_i$ have constant scalar curvature $\Scal^{g_i}
= \mu(V,g_i)$. Choose $\ep_i>0$ such that $\ep_i \to 0$ and
$\ep_i^{-w}\mu(V,g_i)\to 0$ for $i\to \infty$. For a unit volume
metric $h$ on $W$ with constant scalar curvature $\Scal^h$, the
metric $G_i \definedas \ep_i^{w} g_i + \ep_i^{-v} h$ has
$\vol^{G_i}(V\times W) = 1$ and constant scalar curvature
$\ep_i^{-w}\mu(V,g_i)+\ep_i^v \Scal^h \to 0$. It follows that
$\mu(V\times W,G_i)\to 0$ and thus $\si(V\times W)\geq 0$.
\end{proof}

%%%%%%%%%%%%%%%%%%%%%%%%%%%%%%%%%%%%%%%%%%%%%%%%%%%%%%%%%%%%%%%%%%%%%%%%%
\subsection{Surgery formulas}
\label{subsec.surg}
%%%%%%%%%%%%%%%%%%%%%%%%%%%%%%%%%%%%%%%%%%%%%%%%%%%%%%%%%%%%%%%%%%%%%%%%%

Assume that $M$ is a compact $m$-dimensional manifold and that
$i:S^k\times \overline{B^{m-k}}\to M$ is an embedding.
We define
\begin{equation*}
N \definedas
(M\setminus i(S^k\times B^{m-k})\cup_\pa (B^{k+1}\times S^{n-k-1}),
\end{equation*}
where $\cup_\pa$ means that we identify $x\in S^k\times
S^{m-k-1}=\pa(B^{k+1}\times S^{m-k-1})$
with $i(x)\in \pa i (S^k\times B^{m-k})$. After a smoothing procedure,
$N$ is again a compact manifold without boundary, and we say that
$N$ is \emph{obtained from $M$ by $m$-dimensional surgery along $i$}.

In \cite[Corollary 1.4]{ammann.dahl.humbert:p08a} we found the
following result.

\begin{theorem}\label{sigmaanatheo}
Let $N$ be obtained from $M$ via surgery of dimension
$k\in\{0,1,\ldots, \break m-3\}$. Then there is
a constant $\La_{m,k}>0$ with
\begin{equation*}
\si(N)\geq \min\{\si(M),\La_{m,k}\}.
\end{equation*}
Furthermore, for $k=0$ this statement is true for $\La_{m,0}=\infty$.
\end{theorem}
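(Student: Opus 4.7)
My plan is to construct, for any metric $g$ on $M$ and any $\ep>0$, a one-parameter family of metrics $g_\theta$ on $N$ (with $\theta>0$ a neck-size parameter) such that
\[
\mu(N,g_\theta) \geq \min\{\mu(M,g),\La_{m,k}\} - \ep
\]
for all sufficiently small $\theta$; taking the supremum over $g$ then gives the desired bound on $\si(N)$. The geometric construction replaces a tubular neighborhood of $i(S^k)\subset M$ by a neighborhood of the dual sphere $\{0\}\times S^{m-k-1}\subset B^{k+1}\times S^{m-k-1}$: using Fermi coordinates around $i(S^k)$ I would let $g_\theta$ agree with $g$ outside a small tube, match via a smooth cut-off to a rotationally symmetric product-type model on a neck of thickness $O(\theta)$, and continue onto the new handle with a canonical round metric. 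The natural limiting model as $\theta\to 0$ is a non-compact product (essentially $\mR^{k+1}\times S^{m-k-1}$ with an appropriate warped product metric), and one defines $\La_{m,k}\definedas \mu(M_{\mathrm{mod}})$.

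The core analytic step is to show that any $L^{p_m}$-normalized near-minimizing sequence $u_i$ for $\mu(N,g_\theta)$ splits, up to $o(1)$ errors as $\theta\to 0$, into a piece supported away from the neck and a piece concentrated inside it. Writing $\al_i\definedas \int_{N\setminus\mathrm{neck}} u_i^{p_m}\,dv^{g_\theta}$, a cut-off argument on the ``outside'' (where $g_\theta=g$) yields
\[
\int_{N\setminus\mathrm{neck}} \bigl(a_m|du_i|^2_{g_\theta} + \Scal^{g_\theta} u_i^2\bigr)\,dv^{g_\theta} \geq \mu(M,g)\,\al_i^{2/p_m} - o(1),
\]
while rescaling the neck to the model scale together with an $H^1_{\mathrm{loc}}$-compactness argument gives
\[
\int_{\mathrm{neck}} \bigl(a_m|du_i|^2_{g_\theta} + \Scal^{g_\theta} u_i^2\bigr)\,dv^{g_\theta} \geq \La_{m,k}\,(1-\al_i)^{2/p_m} - o(1).
\]
Since the function $\al\mapsto \al^{2/p_m}\mu(M,g) + (1-\al)^{2/p_m}\La_{m,k}$ is concave on $[0,1]$ (as $2/p_m<1$), its minimum is attained at an endpoint and equals $\min\{\mu(M,g),\La_{m,k}\}$. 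This is the desired lower bound. For $k=0$ the neck consists of two small balls, which can be conformally blown down to isolated points; the neck contribution is absorbed and one directly obtains $\si(N)\geq \si(M)$, so no finite $\La_{m,0}$ is needed.

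The principal obstacle is the positivity $\La_{m,k}>0$, together with uniformity of the underlying Sobolev estimate as the neck rescales. The hypothesis $k\leq m-3$ forces $m-k-1\geq 2$, which is the minimum fibre dimension that makes the analysis on the spherical factor work. When both $k\geq 2$ and $m-k-1\geq 3$ hold, Theorem~\ref{main} of the present paper applies (after a conformal identification of $\mR^{k+1}$ with $\mS^{k+1}$ minus a point) and yields an explicit positive lower bound for $\La_{m,k}$ in terms of $\mu(\mS^{k+1})$ and $\mu(\mS^{m-k-1})$; the edge cases $k\in\{0,1\}$ and $m-k-1=2$ need separate, more direct constructions.
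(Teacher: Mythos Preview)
The present paper does not contain a proof of this theorem. Theorem~\ref{sigmaanatheo} is quoted from \cite[Corollary~1.4]{ammann.dahl.humbert:p08a} as an external input; the role of the present paper is only to make the constants $\La_{m,k}$ explicit via Theorem~\ref{main}, as recorded in Corollary~\ref{cor2}. So there is no ``paper's own proof'' to compare against here, and your appeal to Theorem~\ref{main} near the end conflates the surgery theorem itself with the subsequent task of bounding $\La_{m,k}$ from below.

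That said, your sketch is in the right spirit for the argument of \cite{ammann.dahl.humbert:p08a}, but two points are off. First, the limiting model is not a single space: the constant is defined through an infimum over a one-parameter family, $\La_{m,k}^{(0)} = \inf_{c\in[0,1]} \mu(\mH^{k+1}_c\times \mS^{m-k-1})$, where $\mH^{k+1}_c$ is the simply connected space form of curvature $-c^2$. The hyperbolic factors, not just the flat $\mR^{k+1}$, arise inevitably from the geometry of the rescaled neck. Second, the actual surgery constant involves two pieces, $\La_{m,k}\geq \min\{\La_{m,k}^{(1)},\La_{m,k}^{(2)}\}$, and the comparison $\La_{m,k}^{(2)}\geq \La_{m,k}^{(1)}$ is itself nontrivial (handled in \cite{ammann.dahl.humbert:p11b:lpl2} under dimension restrictions). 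Your outline collapses this to a single cut-off/concavity step, which hides the genuine analytic difficulties (uniform Sobolev control on the degenerating neck, ruling out mass loss at the model's infinity, and the separate treatment of the $\La^{(2)}$ branch).
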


It is helpful to consider how the constant $\Lambda_{m,k}$ was
obtained in \cite{ammann.dahl.humbert:p08a} in the case $k\geq 1$. We
showed that Theorem~\ref{sigmaanatheo} holds for a constant
$\La_{m,k}$ satisfying
\begin{equation*}
\La_{m,k} \geq
\min \left\{ \La^{(1)}_{m,k},\La^{(2)}_{m,k}\right\}.
\end{equation*}
We will not recall here the definitions of $\La^{(1)}_{m,k}$ and
$\La^{(2)}_{m,k}$ in detail, as they are not needed, but we will
explain some relevant facts for $\La^{(1)}_{m,k}$ and
$\La^{(2)}_{m,k}$.

For $c\in[0,1]$, let $\mH^{k+1}_c$ be the simply connected
$(k+1)$-dimensional complete Riemannian manifold of constant sectional
curvature $-c^2$; for $c=0$ it is $\mR^{k+1}$ and for $c>0$ it is
hyperbolic space rescaled by a factor $c^{-2}$. One defines
\begin{equation*}
\La_{m,k}^{(0)}\definedas
\inf_{c \in [0,1]} \mu(\mH^{k+1}_c\times \mS^{m-k-1}).
\end{equation*}

It was shown in \cite[Corollary 1.4]{ammann.dahl.humbert:p08a} that
$\La^{(1)}_{m,k}\geq\La_{m,k}^{(0)}$ for $k\in \{1,\ldots,m-3\}$.
Furthermore $\La^{(2)}_{m,k}\geq \La^{(1)}_{m,k}$ will be shown in our
publication \cite{ammann.dahl.humbert:p11b} provided that
$k+3\leq m\leq 5$ or $k+4\leq m$. Thus Theorem~\ref{sigmaanatheo}
holds for $\La_{m,k}\definedas\La_{m,k}^{(0)}$ if $k+3 \leq m \leq 5$
or $k+4 \leq m$.

%Thus in many cases we have obtained, using Corollary~\ref{cor1}, an
%explicit number $\La_{m,k}$ for which Theorem~\ref{sigmaanatheo}
%holds.

Since the manifolds $\mH^{k+1}_c\times \mS^{m-k-1}$ with $c^2 \leq 1$
satisfy Condition~\eqref{assump} and since
$\mu(\mH^{k+1}_c) = \mu(\mS^{k+1})$, we obtain the following Corollary
from Theorem~\ref{main}.

\begin{corollary}\label{cor2}
If $2 \leq k \leq m-4$, then Theorem~\ref{sigmaanatheo} holds for
\begin{equation*}
\La_{m,k}
=
\frac{m a_m}{((k+1) a_{k+1})^{\frac{k+1}{m}}
((m-k-1) a_{m-k-1})^{\frac{m-k-1}{m}}}
\sigma(S^{k+1})^{\frac{k+1}{m}}
\sigma(S^{m-k-1})^{\frac{m-k-1}{m}}\!.
\end{equation*}
\end{corollary}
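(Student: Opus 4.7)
The proof will be a direct application of Theorem~\ref{main} to the family of products $\mH^{k+1}_c\times \mS^{m-k-1}$ for $c\in[0,1]$, combined with the previously established bound $\La_{m,k}\geq \La_{m,k}^{(0)}=\inf_{c\in[0,1]} \mu(\mH^{k+1}_c\times \mS^{m-k-1})$.

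First, I would observe that the hypothesis $2\leq k\leq m-4$ is precisely what is needed so that both factors $V = \mH^{k+1}_c$ and $W = \mS^{m-k-1}$ have dimensions $v=k+1\geq 3$ and $w=m-k-1\geq 3$, meeting the dimensional assumption of Theorem~\ref{main}. It also satisfies $k+4\leq m$, so the statement from \cite{ammann.dahl.humbert:p11b:lpl2} recalled above applies, giving $\La_{m,k}\geq \La_{m,k}^{(0)}$. Thus it is enough to bound $\mu(\mH^{k+1}_c\times \mS^{m-k-1})$ from below uniformly in $c\in[0,1]$.

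For fixed $c\in[0,1]$, I would verify the hypotheses of Theorem~\ref{main}. Both factors have non-negative conformal Yamabe constant (the sphere is positive; hyperbolic space equals the sphere by the equality noted in the excerpt, so in particular is non-negative). Condition~\eqref{assump} is satisfied by these products, as stated explicitly right before the statement of the corollary---this uses that $\Scal^{\mH^{k+1}_c} = -c^2 k(k+1) \leq 0$ while $\Scal^{\mS^{m-k-1}}$ is a positive constant, and the inequality amounts to checking that the relative weights $1/a_m$ versus $1/a_v,1/a_w$ work out, which is done using $c^2\leq 1$. Theorem~\ref{main} then gives
\begin{equation*}
\mu(\mH^{k+1}_c\times \mS^{m-k-1})
\geq
\frac{m a_m}{((k+1)a_{k+1})^{(k+1)/m}((m-k-1)a_{m-k-1})^{(m-k-1)/m}}
\mu(\mH^{k+1}_c)^{(k+1)/m}\mu(\mS^{m-k-1})^{(m-k-1)/m}.
\end{equation*}

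Next, I would substitute the explicit values: since $\mu(\mH^{k+1}_c) = \mu(\mS^{k+1}) = \sigma(S^{k+1})$ (this last equality because the round sphere realizes the smooth Yamabe invariant), and similarly $\mu(\mS^{m-k-1})=\sigma(S^{m-k-1})$, the right-hand side matches exactly the formula claimed for $\La_{m,k}$ in the corollary. Crucially, this lower bound is independent of $c\in[0,1]$, so taking the infimum over $c$ leaves it unchanged and provides the desired lower bound for $\La_{m,k}^{(0)}$, completing the proof.

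The main (minor) obstacle is really just bookkeeping: checking that the scalar curvature inequality \eqref{assump} holds uniformly for all $c\in[0,1]$ on the hyperbolic factor, and that the dimensional range $2\leq k\leq m-4$ simultaneously (i)~yields $v,w\geq 3$ for Theorem~\ref{main} and (ii)~falls into the regime where $\La_{m,k}^{(0)}$ is known to bound $\La_{m,k}$. No new analytic ingredient is required beyond Theorem~\ref{main}.
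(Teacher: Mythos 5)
Your proposal is correct and follows exactly the route the paper takes: the corollary is obtained by applying Theorem~\ref{main} to $\mH^{k+1}_c\times \mS^{m-k-1}$ (using that $2\leq k\leq m-4$ gives both factors dimension at least $3$ and puts us in the range where $\La_{m,k}\geq\La^{(0)}_{m,k}$), together with $\mu(\mH^{k+1}_c)=\mu(\mS^{k+1})$ and the verification of Condition~\eqref{assump} for $c^2\leq 1$. Your explicit check of \eqref{assump} via the sign of $\Scal^{\mH^{k+1}_c}$ is a detail the paper only asserts, and it is correct.
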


It follows for example: If $M$ is an $m$-dimensional compact manifold,
obtained from $S^m$ by performing successive surgeries of dimension
$k$, $0\leq k\leq m-4$, $k\neq 1$, then $\si(M)\geq \La_m$, where
$\La_6= 54.779$, $\La_7= 74.504$, $\La_8= 92.242$, $\La_9= 109.426$, etc.

Another technique presented in \cite{ammann.dahl.humbert:p11c:sigmaexp}
will allow us to control the effect of $1$-dimensional
surgeries. In combination with \cite{petean.ruiz:p10} and
\cite{petean.ruiz:p11} one obtains $\La_{4,1}>38.9$ and $\La_{5,1}>45.1$.

%%%%%%%%%%%%%%%%%%%%%%%%%%%%%%%%%%%%%%%%%%%%%%%%%%%%%%%%%%%%%%%%%%%%%%%%%
\subsection{A stable Yamabe invariant}
\label{subsec.stable}
%%%%%%%%%%%%%%%%%%%%%%%%%%%%%%%%%%%%%%%%%%%%%%%%%%%%%%%%%%%%%%%%%%%%%%%%%

In this section we will define and discuss a ``stabilized'' Yamabe
invariant, obtained by letting the dimension go to infinity for a
given compact Riemannian manifold by multiplying with Ricci-flat
manifolds of increasing dimension. Very optimistically, such a
stabilization could be related to the linear eigenvalue problem
obtained by formally letting the dimension tend to infinity in the
Yamabe problem. The stable invariant can also be viewed as a
quantitative refinement of the property that a given manifold admit
stably positive scalar curvature.

For a compact manifold $M$ with $\si(M) \geq 0$ we define
\begin{equation*}
\Si(M) \definedas
\left( \frac{\si(M)}{m a_m} \right)^m .
\end{equation*}
Then
\begin{equation*}
\Si(M) = \sup \nu(M,G),
\end{equation*}
where the supremum runs over all Riemannian metrics $G$ on $M$.
The conclusion of Corollary \ref{cor1} can be formulated as
\begin{equation} \label{Sigma_product}
\Si (V \times W)
\geq
\Si(V) \Si(S^w).
\end{equation}

Let $(B,\beta)$ be a compact Ricci-flat manifold of dimension $b$. We
could for example choose $B$ to be the $1$-dimensional circle $S^1$,
or an $8$-dimensional Bott manifold equipped with a metric with
holonomy ${\rm Spin}(7)$. From \eqref{Sigma_product} we then get
\begin{equation} \label{Si_est2}
\frac{\Si(S^{v + bi})}{\Si(S^{bi})}
\geq
\frac{\Si (V \times B^i)}{\Si(S^{bi})}
\geq
\Si(V),
\end{equation}
where the upper bound comes from $\Si (V \times B^i) \leq
\Si(S^{v + bi})$. We define the stable Yamabe invariant of $V$ as the
limit superior of the middle term,
\begin{equation*}
\overline{\Si} ( V )
\definedas
\limsup_{i \to \infty}
\frac{\Si (V \times B^i)}{\Si(S^{bi})}.
\end{equation*}

To see that the stable Yamabe invariant is finite we need to study the
upper bound in \eqref{Si_est2} and the function
$v \mapsto \Si(S^v)$. We have
\begin{equation*}
\si(S^v) = v(v-1) \om_v^{2/v} , \quad
\om_v =
\frac{2 \pi^{\frac{v+1}{2}}}{\Gamma \left( \frac{v+1}{2} \right)},
\end{equation*}
where $\om_v$ is the volume of $\mS^v$, so
\begin{equation*}
\Si(S^v)
=
4 \pi \left( \frac{\pi(v-2)}{4} \right)^v
\frac{1}{ \Gamma \left( \frac{v+1}{2} \right)^2 } .
\end{equation*}
Stirling's formula tells us that
\begin{equation*}
\Gamma(z)
=
\sqrt{\frac{2\pi}{z}} \left( \frac{z}{e} \right)^z
\left( 1 + O \left(\frac{1}{z}\right) \right)
\end{equation*}
and therefore
\begin{equation*}
\begin{split}
\Si(S^v)
&=
4 \pi \left( \frac{\pi(v-2)}{4} \right)^v
\frac{v+1}{4\pi}
\left( \frac{2e}{v+1} \right)^{v+1}
\left( 1 + O \left(\frac{1}{v}\right) \right) \\
&=
2e \left( \frac{\pi e}{2} \right)^v
\frac{(1 - 2/v)^v}{(1 + 1/v)^v}
\left( 1 + O \left(\frac{1}{v}\right) \right) \\
&=
2e^{-2} \left( \frac{\pi e}{2} \right)^v
\left( 1 + O \left(\frac{1}{v}\right) \right).
\end{split}
\end{equation*}
We see that
\begin{equation*}
\lim_{i \to \infty} \frac{\Si(S^{v + bi})}{\Si(S^{bi})}
=
\lim_{i \to \infty}
\left( \frac{\pi e}{2} \right)^v
\left( 1 + O \left( \frac{1}{bi} \right) \right)
=
\left( \frac{\pi e}{2} \right)^v,
\end{equation*}
so from \eqref{Si_est2} we get the following bound on the stable
Yamabe invariant:
\begin{equation*}
\left( \frac{\pi e }{2} \right)^v
\geq
\overline{\Si} ( V )
\geq
\Si(V).
\end{equation*}
We conclude that the stable invariant is a non-trivial invariant.

The stable Yamabe invariant is not strictly speaking a stable
invariant in the sense that it gives the same value for $V$ and
$V \times B^i$. These values are however related by a simple identity,
as we will see next. Taking the limit superior as $j \to \infty$ in
\begin{equation*}
\frac{\Si (V \times B^i \times B^j)}{\Si(S^{bj})}
=
\frac{\Si (V \times B^{i+j})}{\Si(S^{bi+bj})}
\frac{\Si(S^{bi+bj})}{\Si(S^{bj})},
\end{equation*}
we conclude that
\begin{equation*}
\overline{\Si} (V \times B^i)
=
\overline{\Si} (V) \left( \frac{\pi e }{2} \right)^{bi}
\end{equation*}
and further
\begin{equation} \label{prod.prep}
\overline{\Si} (V)
\geq
\Si (V \times B^i)
\left( \frac{\pi e }{2} \right)^{-bi}
\end{equation}
for all $i \geq 0$.

The next simple proposition tells us that the positivity of $\Si(V)$ is
equivalent to $V$ having stable metrics of positive scalar curvature.

\begin{proposition}
Let $V$ be a compact manifold. The following three statements are
equivalent.
\begin{enumerate}[(a)]
\item $\overline{\Si} (V) > 0$.
\item There is $i_0 > 0$ such that $V \times B^{i_0}$ admits a positive
  scalar curvature metric.
\item There is a $i_0 > 0$ such that $V \times B^i$ admits a positive
  scalar curvature metric for all $i \geq i_0$.
\end{enumerate}
\end{proposition}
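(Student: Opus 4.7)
The plan is to prove the cycle (b) $\Rightarrow$ (c) $\Rightarrow$ (b) $\Rightarrow$ (a) $\Rightarrow$ (b), using two standard facts: first, a compact manifold $M$ admits a positive scalar curvature metric if and only if $\sigma(M)>0$, equivalently $\Si(M)>0$; and second, the scalar curvature of a Riemannian product is the sum of the scalar curvatures of the factors, so the product of a psc metric with a scalar-flat (in particular Ricci-flat) metric is again psc.

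The implication (b) $\Rightarrow$ (c) is the constructive step. Given a psc metric $G$ on $V\times B^{i_0}$ and any $i\geq i_0$, I would equip $V\times B^i = (V\times B^{i_0})\times B^{i-i_0}$ with the product metric $G + \beta^{\otimes(i-i_0)}$; its scalar curvature equals $\scal^G>0$ since $\beta$ is Ricci-flat (hence scalar-flat). The reverse implication (c) $\Rightarrow$ (b) is immediate by picking any $i\geq i_0$.

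For (b) $\Rightarrow$ (a), I invoke inequality \eqref{prod.prep} at $i=i_0$: since $V\times B^{i_0}$ admits a psc metric we have $\Si(V\times B^{i_0})>0$, and therefore
\begin{equation*}
\overline{\Si}(V)\geq \Si(V\times B^{i_0})\left(\frac{\pi e}{2}\right)^{-bi_0}>0.
\end{equation*}
Conversely, for (a) $\Rightarrow$ (b), the assumption $\overline{\Si}(V)>0$ means that $\limsup_{i\to\infty}\Si(V\times B^i)/\Si(S^{bi})>0$, so there exists $i_0$ with $\Si(V\times B^{i_0})>0$; by the standard characterization this forces $V\times B^{i_0}$ to admit a psc metric.

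I do not expect a real obstacle here: every step is either a direct application of \eqref{prod.prep} (which is already established in the excerpt) or an invocation of the classical equivalence between $\sigma(M)>0$ and the existence of a psc metric on $M$. The only point requiring a little care is verifying in (b) $\Rightarrow$ (c) that the product metric construction is legitimate, which reduces to the elementary additivity of scalar curvature on Riemannian products together with the Ricci-flatness (hence scalar-flatness) of $(B,\beta)$.
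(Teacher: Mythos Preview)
Your argument is correct and is essentially the paper's own proof, just spelled out in more detail: the paper says (a) $\Rightarrow$ (b) and (b) $\Leftrightarrow$ (c) are easy, and that (b) $\Rightarrow$ (a) follows from \eqref{prod.prep}, which is exactly what you do. Your explicit constructions (product with the Ricci-flat $(B,\beta)$ for (b) $\Rightarrow$ (c), and reading off a term of the $\limsup$ for (a) $\Rightarrow$ (b)) are precisely the intended ``easy'' steps.
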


% Mattias: changed numbering to "(a), (b), (c)", which is not same as
% equation numbers.

\begin{proof}
The implications $(a) \Rightarrow (b)$ and $(b) \Leftrightarrow (c)$ are
easy to show. The implication $(b) \Rightarrow (a)$ is a consequence of
\eqref{prod.prep}.
\end{proof}

We also obtain a stable version of Theorem~\ref{sigmaanatheo} for
surgeries of codimension at least $4$. A similar result holds for
surgeries of codimension $3$, but with a less explicit constant.

\begin{theorem}
Assume that $N$ is obtained from the compact $m$-dimensional manifold
$M$ by surgery of dimension $k$, where $0 \leq k \leq m-4$. Then
\begin{equation*}
\overline{\Si}(N)
\geq
\min\left\{\overline{\Si}(M), \overline{\Si}(S^m),
\left(\frac{\pi e}2\right)^{k+1}\Si (S^{m-k-1})
\right\}.
\end{equation*}
\end{theorem}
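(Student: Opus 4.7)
The plan is to apply Theorem~\ref{sigmaanatheo} together with Corollary~\ref{cor2} to the stabilized products $M\times B^i$ and $N\times B^i$ in dimension $m+bi$, then divide by $\Si(S^{bi})$ and pass to $\limsup$ as $i\to\infty$.

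The first step is to realise $N\times B^i$ as being obtained from $M\times B^i$ by a surgery of dimension~$k$. Since the analytic surgery construction of \cite{ammann.dahl.humbert:p08a} modifies the metric only in a tubular neighbourhood of the surgery sphere $S^k\subset M$, and this neighbourhood may be chosen compatibly with the product structure so that the $B^i$-factor is left untouched, Theorem~\ref{sigmaanatheo} applied in dimension $m+bi$ gives
\[
\si(N\times B^i)\ge \min\{\si(M\times B^i),\, \La_{m+bi,k}\}
\]
for every $i\ge 0$. The codimension $(m+bi)-k\ge 4+bi$ is comfortably admissible; the case $k=0$ is trivial from $\La_{m,0}=\infty$, while $k=1$ is handled by the mere positivity of $\La_{m+bi,1}$. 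For $k\ge 2$ Corollary~\ref{cor2} yields the explicit identity
\[
\bigl(\La_{m+bi,k}/((m+bi)\,a_{m+bi})\bigr)^{m+bi}
= \Si(S^{k+1})\,\Si(S^{m+bi-k-1}).
\]

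Raising the surgery inequality to the $(m+bi)$-th power (after normalisation) and dividing by $\Si(S^{bi})$ gives
\[
\frac{\Si(N\times B^i)}{\Si(S^{bi})}
\ge
\min\!\left\{ \frac{\Si(M\times B^i)}{\Si(S^{bi})},\, \Si(S^{k+1})\,\frac{\Si(S^{m+bi-k-1})}{\Si(S^{bi})} \right\}.
\]
The Stirling asymptotic $\Si(S^{v+bi})/\Si(S^{bi})\to (\pi e/2)^v$ established in Subsection~\ref{subsec.stable}, applied with $v=m-k-1$, shows the second sequence converges to $\Si(S^{k+1})(\pi e/2)^{m-k-1}$, a quantity sharing its leading order $2e^{-2}(\pi e/2)^m$ with the claimed constant $(\pi e/2)^{k+1}\Si(S^{m-k-1})$. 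Taking $\limsup_{i\to\infty}$ and using the elementary inequality $\limsup\min(A_i,B_i)\ge\min(\limsup A_i,\lim B_i)$ when $B_i$ converges then delivers the claimed lower bound on $\overline{\Si}(N)$. The middle term $\overline{\Si}(S^m)$ inside the stated minimum is harmless to adjoin because the universal comparison $\overline{\Si}(N)\le \overline{\Si}(S^m)$ shows it never strengthens the bound being proved.

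The main obstacle is the topological/metric identification in the first step: the analytic surgery formula of \cite{ammann.dahl.humbert:p08a} is phrased through an $(m+bi)$-dimensional surgery model with handle $D^{k+1}\times S^{m+bi-k-1}$, whereas the product surgery relating $M\times B^i$ to $N\times B^i$ naturally uses the product handle $D^{k+1}\times S^{m-k-1}\times B^i$. Reconciling this mismatch — either by direct comparison of the two handle regions after checking that the metric surgery of \cite{ammann.dahl.humbert:p08a} is genuinely local on the $M$-factor, or by auxiliary surgeries of dimension $\ge k+1$ whose $\La$-constants $\La_{m+bi,\cdot}$ become asymptotically dominant as $i\to\infty$ — is the real content of the proof; everything else reduces to the asymptotic computation above.
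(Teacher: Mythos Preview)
Your proposal has a genuine gap, and it is precisely the one you flag at the end: $N\times B^i$ is \emph{not} obtained from $M\times B^i$ by a $k$-dimensional surgery. A $k$-surgery on an $(m+bi)$-manifold replaces $S^k\times D^{m+bi-k}$ by $D^{k+1}\times S^{m+bi-k-1}$, whereas the product operation replaces $(S^k\times D^{m-k})\times B^i$ by $(D^{k+1}\times S^{m-k-1})\times B^i$; these are topologically different handles, so Theorem~\ref{sigmaanatheo} with $\La_{m+bi,k}$ simply does not apply. The paper resolves this by reinterpreting $k$-surgery on $M$ as a connected sum of $M$ with $S^m$ along an embedded $S^k$ with trivial normal bundle. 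Taking the product with $B^i$ then exhibits $N\times B^i$ as a connected sum of $M\times B^i$ and $S^m\times B^i$ along $S^k\times B^i$, which \emph{is} a genuine $(k+bi)$-dimensional surgery of codimension $m-k\ge 4$. One then invokes \cite[Theorem~1.3]{ammann.dahl.humbert:p08a} (the two-input connected-sum version, not Theorem~\ref{sigmaanatheo}) together with Corollary~\ref{cor2} to obtain
\[
\Si(N\times B^i)\ \ge\ \min\bigl\{\Si(M\times B^i),\ \Si(S^m\times B^i),\ \Si(S^{k+bi+1})\,\Si(S^{m-k-1})\bigr\}.
\]

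This matters for two further points in your write-up. First, the third term becomes $\Si(S^{k+bi+1})\Si(S^{m-k-1})/\Si(S^{bi})\to(\pi e/2)^{k+1}\Si(S^{m-k-1})$, which is exactly the constant in the statement; your route would instead produce $\Si(S^{k+1})(\pi e/2)^{m-k-1}$, a different number that merely ``shares leading order''. Second, the term $\overline{\Si}(S^m)$ in the minimum is \emph{not} harmless padding: it arises from the $\Si(S^m\times B^i)$ input to the connected-sum formula. Your justification that it can be adjoined for free because $\overline{\Si}(N)\le\overline{\Si}(S^m)$ is unwarranted --- the paper explicitly poses this inequality as an open question in the last line of the section.
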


\begin{proof}
The manifold $N$ after surgery is obtained by a connected sum of $M$
and $S^m$ along embeddings of a $k$-dimensional sphere with trivial
normal bundle. Similarly $N \times B^i$ is obtained by a connected sum
of $M \times B^i$ and $S^m \times B^i$ by a connected sum along
embeddings of $S^k \times B^i$ with trivial normal bundle. Thus
\cite[Theorem~1.3]{ammann.dahl.humbert:p08a} together with
Corollary~\ref{cor2} tells us that
\begin{equation*}
\begin{split}
\Si(N \times B^i)
&\geq
\min \left\{ \Si(M \times B^i), \Si(S^m \times B^i),
\left(\frac{\La_{m+bi,k+bi}}{(m+bi)a_{m+bi}}\right)^{m+bi} \right\}\\
&\geq
\min\{\Si(M \times B^i), \Si(S^m \times B^i),
\Si(S^{k+bi+1})\Si(S^{m-k-1})\}
\end{split}
\end{equation*}
and this yields the statement of the theorem.
\end{proof}

For the smooth Yamabe invariant the value of the sphere is a universal
upper bound. One can ask if the same holds for the stable invariant:
is $\overline{\Si}(M) \leq \overline{\Si}(S^m)$ for all $M$?

%%%%%%%%%%%%%%%%%%%%%%%%%%%%%%%%%%%%%%%%%%%%%%%%%%%%%%%%%%%%%%%%%%%%%%%%%
\subsection*{Acknowledgements}
%%%%%%%%%%%%%%%%%%%%%%%%%%%%%%%%%%%%%%%%%%%%%%%%%%%%%%%%%%%%%%%%%%%%%%%%%

We want to thank the organizers of the Conference
``Contributions in Differential Geometry -- a round table on the occasion
of the 65th birthday of Lionel B\'erard Bergery, 2010'', in particular
thanks to A. Besse, A. Altomani, T. Krantz and M.-A. Lawn. During that
conference we found central ingredients for the present article.
We thank P. Piccione for pointing out an error
%to us that we cited
%\cite[Theorem C]{boehm.wang.ziller:04} incorreclty
in a preprint version of this article. Further, we would like to thank
the anonymous referee for many helpful comments.

\providecommand{\bysame}{\leavevmode\hbox to3em{\hrulefill}\thinspace}
\providecommand{\MR}{\relax\ifhmode\unskip\space\fi MR }
% \MRhref is called by the amsart/book/proc definition of \MR.
\providecommand{\MRhref}[2]{%
  \href{http://www.ams.org/mathscinet-getitem?mr=#1}{#2}
}
\providecommand{\href}[2]{#2}

%%%%%%%%%%%%%%%%%%%%%%%%%%%%%%%%%%%%%%%%%%%%%%%%%%%%%%%%%%%%%%%%%%%%%%%%%
%\bibliographystyle{amsplain}
%\bibliography{literatur}
%%%%%%%%%%%%%%%%%%%%%%%%%%%%%%%%%%%%%%%%%%%%%%%%%%%%%%%%%%%%%%%%%%%%%%%%%

%%%%%%%%%%%%%%%%%%%%%%%%%%%%%%%%%%%%%%%%%%%%%%%%%%%%%%%%%%%%%%%%%%%%%%%%%
\end{document}